\newcommand{\assign}{\mathrel{\mathop :}=}
\newtheorem{lem}{\textbf{Lemma}}[section]
\newtheorem{theorem}[lem]{\textbf{Theorem}}
\newtheorem{rem}[lem]{\textbf{Remark}}
\theoremstyle{definition}
\theoremstyle{definition}
\theoremstyle{remark}
\newcommand{\norm}[1]{\left\Vert#1\right\Vert}
\newcommand{\abs}[1]{\left\vert#1\right\vert}
   \thanks{\!\!\!\!\!\!\!\! \!\!${^{*}}$Corresponding author \\  2010 Mathematics Subject Classification: 47H09; 47H10.\\Keywords:  Inverse-strongly monotone mapping; Strongly monotone mapping; Relaxed $(u, v)$-cocoercive mapping. \\ E-mail addresses: agarwal@tamuk.edu, sori.e@lu.ac.ir,\\ donal.oregan@nuigalway.ie  }
\begin{document}
\title[A Simple proof  for  the algorithms ]{A Simple proof  for  the algorithms of relaxed $(u, v)$-cocoercive mappings and  $\alpha$-inverse strongly monotone mappings}
\author[ Agarwal,   Soori,   O'Regan ]{ Ravi P. Agarwal${^{1}}$, Ebrahim Soori${^{*,2}}$, Donal O'Regan${^{3}}$}
%\address[1]{Department  of Mathematics, Texas A $\&$ M University Kingsville, Kingsville, USA,}
%\address[2]{Department  of Mathematics, Lorestan University, P.O. Box 465, Khoramabad, Lorestan, Iran,}
%\address[3]{School of Mathematics, Statistics, National University of Ireland, Galway, Ireland.}

%\email{sori.e@lu.ac.ir, sori.ebrahim@yahoo.com}
%\subjclass[2010]{ 47H09, 47H10}

\date{}

%\dedicatory{Dedicated to the Memory of Som Naimpally}

%\newcommand{\AuthorNames}{E. Soori,  J.F. Peters}
%
%%%% ENTER MSC, KEYWORDS, RECEIVED, EDITOR, THANKS FOR FINANCIAL SUPPORT FOR RESEARCH
%\newcommand{\FilMSC}{Primary 47H09; Secondary 47H10}
%\newcommand{\FilKeywords}{Opial's condition; Smooth Banach space; Sunny nonexpansive retraction; Representation of nonexpansive mappings}
%\newcommand{\FilCommunicated}{(Hari M. Srivastava)}
%\newcommand{\FilSupport}{Research supported by
%Natural Sciences \& Engineering Research Council of Canada (NSERC) discovery grant 185986.}

\begin{abstract}
In this paper, a simple proof   is   presented    for the Convergence of the algorithms for the class of relaxed $(u, v)$-cocoercive mappings and $\alpha$-inverse strongly monotone mappings.    Based on $\alpha$-expansive maps, for example,
    a  simple  proof of the convergence of  the recent iterative algorithms  by relaxed $(u, v)$-cocoercive mappings   due to  Kumam-Jaiboon is provided.      Also    a simple  proof for the convergence of  the   iterative algorithms by inverse-strongly monotone mappings  due to Iiduka-Takahashi in a special case   is provided.   These results are an improvement as well as a refinement of previously known results.
\end{abstract}

%Keywords:  Inverse-strongly monotone mapping; Strongly monotone mapping; Relaxed $(u, v)$-cocoercive mapping

\maketitle

%%%%%% THIS PART MUST BE PLACED IMMEDIATELY AFTER THE \maketitle COMMAND
%%%%%% BACK TO ORIGINAL FOOTNOTES
%\makeatletter
%\renewcommand\@makefnmark%
%{\mbox{\textsuperscript{\normalfont\@thefnmark)}}}
%\makeatother
%%%%%%

\section{ Introduction}
In this paper,    some  results for the class of  relaxed $(u, v)$-cocoercive mappings and $\alpha$-inverse strongly monotone mappings (in a spacial case) are presented. There are many papers in the literature on  iterative algorithms which are used  for example in optimization problems, variational inequality problems, fixed point problems, equilibrium problems, Nash equilibrium problems, game theory, saddle point problems, minimization problem, feasibility problems, complementarity problems; see  \cite{[15]Kumam2009newHybridIterativeMethod} and the references therein.
In this paper,    a  simple proof   for the convergence of recent iterative algorithms is  presented which improve and refine the proof of  known results in the literature.

\section{Preliminaries}
In this paper,  it is assumed that   $C$ is  a nonempty closed convex subset  of a real Hilbert space $H$ with inner product $\langle \cdot, \cdot \rangle$ and norm $\norm{\cdot}$.    Recall the following well known  concepts:
\begin{enumerate}
   \item a mapping   $A : C \rightarrow H $   is said to be inverse-strongly monotone~\cite{[6]Jaiboon2010cocoerciveMappings}~\cite{[27b]Takahashi2013inverseStronglyMonotone}, if there exist $\alpha > 0$ such that
 \begin{equation*}
    \langle Ax - Ay, x - y\rangle \geq \alpha\|Ax - Ay\|^{2},
 \end{equation*}
   for  all  $x, y \in C$.
	%Such a mapping $A$ is also called $\alpha $-inverse-strongly monotone
	%~\cite{[6]Jaiboon2010cocoerciveMappings}.
   \item  a mapping   $A : C \rightarrow H $   is said to be strongly monotone~\cite{[30]Zhang2012stronglyMonotone}~\cite[\S 1, p. 200]{BrowderPetryshyn1967inverseStronglyMonotone}, if there exists a constant $\alpha > 0$ such that
\begin{equation}\label{hjgfd}
      \langle Ax - Ay, x - y\rangle \geq \alpha\| x -  y\|^{2},
\end{equation}
\item  a mapping $B: C \rightarrow H $ is  said to be relaxed $(u, v)$-cocoercive~\cite{[6]Jaiboon2010cocoerciveMappings}, if there exist two constants $u, v > 0$ such that
 \begin{equation*}
    \langle Bx - By , x - y\rangle \geq (-u)\|Bx - By\|^{2}+v\|x - y\|^{2},
 \end{equation*}
   for  all  $x, y \in C$. For $u = 0$, $B$ is $v$-strongly monotone
	~\cite{[30]Zhang2012stronglyMonotone}.   Clearly, every  $v$-strongly monotone map is a relaxed $(u, v)$-cocoercive map.
\item for a map  $B: C \rightarrow H $ the classical variational problem is to find a $u\in C$ such that   $\langle Bu, v - u\rangle \geq 0,\,\,\forall v\in C$. We denote by  $V I(C,B)$
the set of solutions of the variational inequality problem,
\item let $C$ be a nonempty closed convex subset of a real Hilbert space $ H $.
Let $B$ be a  self  mapping on $C$.  Suppose that  there exists      a positive  integer $\alpha$ such that
 \begin{equation*}\label{golf2}
      \| Bx -By\| \geq \alpha\| x -  y\|.
  \end{equation*}
  for  all  $x, y \in C$, then $B$ is said to be $\alpha$-expansive.
\end{enumerate}

In this paper, using   relaxed $(u, v)$-cocoercive mappings,    a new  proof of some  recent iterative algorithms is presented. A similar comment applies for  inverse-strongly monotone mappings.

\section{\textbf{Relaxed $(u, v)$-cocoercive mappings}}

G. Cai and  S. Bu \cite{[1]CaiBu2011strongConvergence},  W. Chantarangs, C. Jaiboon, and P. Kumam \cite{[2]Chantaragsi2010coerciveMappings},   J.S. Jung \cite{[11]Jung2011compositeIterativeMethod},   P. Kumam and  C. Jaiboon \cite{[15]Kumam2009newHybridIterativeMethod},   X. Qin,  M. Shang and H. Zhou \cite{[19]Qin2008strongConvergence},    X. Qin, M. Shang and Y. Su \cite{[21]Qin2008generalIterativeMethodForEquilibriumProblems},  X. Qin, M. Shang and Y. Su \cite{[20]Qin2008generalIterativeMethodForEquilibriumProblems},
considered some iterative algorithms  for finding a common element of the set of
fixed points of nonexpansive mappings and the set of  solutions of a variational
inequality $V I(C,A)$, where $A$ is a relaxed $(u, v)$-cocoercive  mapping
of $C$ into $H$.

\begin{lem}\label{sgjm}
     Let $A$ be    a relaxed $(m, v)$-cocoercive mapping  and $\epsilon$-Lipschitz continuous such that $v-m \epsilon^{2}>0$ and $V I(C,A)\neq \emptyset$. Then  $A$  is    a   $(v-m \epsilon^{2})$-expansive mapping  and   $V I(C,A)$ is a singleton.
\end{lem}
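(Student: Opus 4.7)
The plan is to first derive strong monotonicity of $A$ from the hypotheses, then deduce expansiveness via the Cauchy--Schwarz inequality, and finally use strong monotonicity together with the variational inequality characterization to force uniqueness of solutions.

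First, I would combine the two hypotheses on $A$. Starting from the relaxed $(m,v)$-cocoercive inequality
\begin{equation*}
\langle Ax-Ay, x-y\rangle \geq -m\|Ax-Ay\|^{2} + v\|x-y\|^{2},
\end{equation*}
I substitute the $\epsilon$-Lipschitz bound $\|Ax-Ay\|^{2}\leq \epsilon^{2}\|x-y\|^{2}$ into the right-hand side (this is where the sign $-m\leq 0$ is used to preserve the inequality), obtaining
\begin{equation*}
\langle Ax-Ay, x-y\rangle \geq (v-m\epsilon^{2})\|x-y\|^{2}.
\end{equation*}
Since $v-m\epsilon^{2}>0$, this shows $A$ is $(v-m\epsilon^{2})$-strongly monotone.

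Next, to obtain expansiveness, I apply the Cauchy--Schwarz inequality to the left-hand side to get $\|Ax-Ay\|\,\|x-y\|\geq \langle Ax-Ay, x-y\rangle \geq (v-m\epsilon^{2})\|x-y\|^{2}$. Dividing through by $\|x-y\|$ when $x\neq y$ (and noting the case $x=y$ is trivial) yields $\|Ax-Ay\|\geq (v-m\epsilon^{2})\|x-y\|$, which is exactly the $(v-m\epsilon^{2})$-expansiveness claimed in Condition~\ref{dgrj}.

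For the singleton claim, I would take two solutions $x_{1}, x_{2}\in VI(C,A)$ and use the defining inequalities $\langle Ax_{1}, y-x_{1}\rangle\geq 0$ and $\langle Ax_{2}, y-x_{2}\rangle\geq 0$ for all $y\in C$. Choosing $y=x_{2}$ in the first and $y=x_{1}$ in the second and adding gives $\langle Ax_{1}-Ax_{2}, x_{2}-x_{1}\rangle \geq 0$, i.e.\ $\langle Ax_{1}-Ax_{2}, x_{1}-x_{2}\rangle \leq 0$. Combined with the strong monotonicity estimate derived above, this forces $(v-m\epsilon^{2})\|x_{1}-x_{2}\|^{2}\leq 0$, and since $v-m\epsilon^{2}>0$ we conclude $x_{1}=x_{2}$. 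The only mildly subtle point is remembering that $VI(C,A)\neq \emptyset$ is assumed so that the singleton conclusion is nonvacuous; no deeper obstacle arises, as the argument is essentially a one-line combination of the two assumptions followed by a standard uniqueness argument for variational inequalities under strong monotonicity.
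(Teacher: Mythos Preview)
Your proposal is correct and follows essentially the same approach as the paper: both combine the relaxed $(m,v)$-cocoercive inequality with $\epsilon$-Lipschitz continuity to obtain the strong monotonicity estimate $\langle Ax-Ay,x-y\rangle \geq (v-m\epsilon^{2})\|x-y\|^{2}$, then use Cauchy--Schwarz to deduce expansiveness, and finally use the standard variational-inequality substitution argument to obtain uniqueness. The only cosmetic differences are that the paper phrases the uniqueness step as a contradiction (assuming $x_{1}\neq x_{2}$) rather than your direct inequality $(v-m\epsilon^{2})\|x_{1}-x_{2}\|^{2}\leq 0$, and it orders the conclusions (singleton, then expansive) in reverse.
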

\begin{proof}
To see that  $V I(C,A)$ is a singleton, one can see  \cite[Proposition 2]{sa}.
Next, since $A$ is $(m, v)$-cocoercive and  $\epsilon$-Lipschitz continuous, for each $x,y \in C$, it is concluded that
\begin{align*}\label{jhgjgl}
  \langle Ax-Ay, x-y \rangle& \geq(-m)\| Ax-Ay\|^{2}+v\|x-y \|^{2} \nonumber \\ \nonumber &\geq (-m \epsilon^{2})\|x-y \|^{2} +v\|x-y \|^{2}\\  &= (v-m \epsilon^{2})\|x-y \|^{2}\geq 0,
\end{align*}
   hence, we have
 \begin{equation*}
   \| Ax-Ay\|\geq  (v-m \epsilon^{2})\|x-y \|,
 \end{equation*}
so $A$ is   $(v-m \epsilon^{2})$-expansive.
\end{proof}

To see an example,      Theorem 3.1 from P. Kumam and C. Jaiboon \cite{[15]Kumam2009newHybridIterativeMethod} is considered. To solve the mixed equilibrium problem for an equilibrium function $\Theta:E\times E\longrightarrow \mathbb{R}$, assume ~\cite[\S 2, p. 512]{[15]Kumam2009newHybridIterativeMethod} that $\Theta$ satisfies the following conditions:

\begin{compactenum}[(H1)]
\item $\Theta$ is monotone, {\em i.e.}, $\Theta(x,y) + \Theta(y,x) \leq 0, \forall x,y\in E$.
\item For each fixed $y\in E, x\mapsto \Theta(x,y)$ is convex and upper semicontinuous.
\item For each $x\in E, x\mapsto \Theta(x,y)$ is convex.
\end{compactenum}

\begin{theorem}\label{thm:3.1}{ (\rm {\em i.e.}, Theorem 3.1, from~\cite[\S 3, p.515]{[15]Kumam2009newHybridIterativeMethod})}
Let $E$ be a nonempty closed convex subset of a real Hilbert space $H$ and let $\varphi$ be a lower semicontinuous  and convex functional from $E$ to $\mathbb{R}$.   Let $\Theta$ be a bifunction from $E\times E$ to $\mathbb{R}$ satisfying $(H1)-(H3)$, let
$\left\{T_n\right\}$ be an infinite family of nonexpansive mappings of $E$ into itself and let $B$ be a $\xi$-Lipschitz continuous and relaxed $(m,v)$-cocoercive map of $E$ into $H$ such that
\[
\Gamma \assign \mathop{\bigcap}\limits_{n=1}^{\infty}F\left(T_n\right)\cap \Omega\cap VI(E,B)\neq \emptyset;
\]
here $\Omega$ is the set of solutions of the mixed variational probem (i.e. the set of $x$'s in $E$ with $\Theta(x,y)+\varphi(y)-\varphi(x) \geq 0,\,\,\forall y \in E$) and $F(T_n)$ is the set of fixed points of $T_n$).
Let $\mu > 0, \gamma > 0, r > 0$, be three constants.   Let $f$ be a contraction of $E$ onto itself with $\alpha\in (0,1)$ and let $A$ be a strongly positive linear bounded operator on $H$ with coefficient $\bar{\gamma} > 0$ and $0 < \gamma < \frac{(1+\mu)\bar{\gamma}}{\alpha}$.   For $x_1\in H$ arbitrarily and fixed $u\in H$, suppose $\left\{x_n\right\}, \left\{y_n\right\}$ and $\left\{z_n\right\}$ are generated iteratively by
\begin{equation*}
\begin{cases}
\Theta(z_n,x) + \varphi(x) - \varphi(z_n) + \frac{1}{r}\langle K'(z_{n})- K'(x_{n}), \eta(x,z_{n} )\rangle \geq 0,\\
y_n = \alpha_n z_n + (1 - \alpha_n)W_nP_E\left(z_n - \lambda_n B z_n\right),\\
x_{n+1} = \epsilon_n\left(u + \gamma f(W_nx_n)\right) + \beta_n x_n + \left((1- \beta_n)I - \epsilon_n(I + \mu A)\right)W_nP_{E}(y_{n}-\tau_{n}By_{n}), &\text{\quad}\\
\end{cases}
\end{equation*}
for all $n\in \mathbb{N}$ and   $ x\in E$, where $W_n$ is the $W$-mapping defined by (2.3) in ~\cite[\S 2, p. 514]{[15]Kumam2009newHybridIterativeMethod} and $\{\epsilon_n\},\{\alpha_n\}$ and $\{\beta_n\}$ are three sequences in (0,1).  Assume the following conditions are satisfied:
\begin{compactenum}[(C1)]
\item $\eta:E\times E \longrightarrow H$ is Lipschitz continuous with constant $\lambda > 0$ such that:
 \begin{compactenum}[(a)]
  \item $\eta(x,y) + \eta(y,x) = 0, \forall x,y\in E$,
  \item $\eta(\cdot,\cdot)$ is affine in the first variable,
  \item for each fixed $y\in E, x\mapsto \eta(y,x)$ is sequentially continuous from the weak topology to the weak topology;
  \end{compactenum}
\item $K: E\longrightarrow \mathbb{R}$ is $\eta$-strongly convex with constant $\sigma > 0$ and its derivative $K'$ is not only sequentially continuous from the weak topology to the strong topology but also Lipschitz continuous with constant $\nu > 0$ such that $\sigma > \lambda\nu$;
\item for each $x\in E$, there exists a bounded subset $D_x \subset E$ and $z_x\in E$ such that, for any $y\in E\setminus D_x$,
\[
\Theta(y, z_x) + \varphi(z_x) - \varphi(y) + \frac{1}{r}\langle K'(y) - K'(x), \eta(z_x,y)\rangle < 0;
\]
\item $\mbox{lim}_{\eta\rightarrow \infty}\alpha_n = 0, \mbox{lim}_{\eta\rightarrow \infty}\epsilon_n = 0$ and $\sum_{n=1}^{\infty}\epsilon_n = \infty$;
\item $0 < \mbox{lim inf}_{\eta\rightarrow \infty}\beta_n \leq \mbox{lim sup}_{\eta\rightarrow \infty}\beta_n < 1$;
\item $\mbox{lim}_{\eta\rightarrow \infty}\abs{\lambda_{n+1}-\lambda_{n}} = \mbox{lim}_{\eta\rightarrow \infty}\abs{\tau_{n+1}-\tau_{n}} = 0$;
\item $\left\{\tau_n\right\},\left\{\lambda_n\right\}\subset [a,b]\ \mbox{for some}\ a,b\ \mbox{with}\ 0\leq a\leq b\leq \frac{2\left(\upsilon-m\xi^2\right)}{\xi^2}.$
\end{compactenum}

Then $\left\{x_n\right\}$ and $\left\{z_n\right\}$ converge strongly to $z\in \Gamma \assign \bigcap_{n=1}^{\infty}F(T_n)\cap \Omega\cap VI(E,B)$, provided that $S_r$ (here $S_r$ is given in  ~\cite[\S 2, p. 513]{[15]Kumam2009newHybridIterativeMethod}) is firmly nonexpansive, which solves the following optimization problem:
\[
OP: \mathop{\mbox{min}}\limits_{x\in\Gamma}\frac{\mu}{2}\langle Ax,x\rangle +\frac{1}{2}\norm{x - u}^2 - h(x);
\]
here $h$ is a potential function for $\gamma\,f$.
\end{theorem}
Next, in the following remark, a simple proof for some similar results is presented:
  \begin{rem}   \textbf{A simple Proof:}\label{bhsskkl54ww}
(i). Consider Theorem~\ref{thm:3.1}  and the  $\xi$-Lipschitz continuous and   relaxed $(m, v)$-cocoercive mapping   $B$ in  Theorem ~\ref{thm:3.1}. From
condition (C7) we may assume that $(v-m\xi^{2})>0$, and hence from Lemma \ref{sgjm}, $B$  is  $(v-m \epsilon^{2})$-expansive,
i.e,
 \begin{equation}\label{djjkmk}
      \| Bx -By\| \geq (v-m\xi^{2})\|x - y\|,
  \end{equation}
and $VI(E, B) $ is   singleton i.e.  there exists an element $p \in E$ such that $VI(E, B)=\{p\} $, hence  $\Gamma=\{p\} $ in Theorem ~\ref{thm:3.1}.
  The authors    prove (see  (3.25) in ~\cite[ p. 521]{[15]Kumam2009newHybridIterativeMethod}) that
  \begin{equation}\label{golf2ww}
    \displaystyle \lim_{n}\|Bz_{n}-Bp\|=0.
  \end{equation}
 Now, put  $x=z_{n}$  and $y=p$ in  \eqref{djjkmk},  and  from \eqref{djjkmk} and \eqref{golf2ww},  we have
   \begin{equation*}
     \displaystyle \lim_{n}\|z_{n}-p\|=0.
   \end{equation*}
Hence,    $z_{n} \rightarrow p$.  As a result one of the main claims of  Theorem  ~\ref{thm:3.1}  is established (note $ \Gamma= VI(E, B)=\{p\} $).
Note the proof in Theorem ~\ref{thm:3.1} can  be simplified further by using this remark (for example Step 5 in ~\cite[ p. 526-528]{[15]Kumam2009newHybridIterativeMethod} is not needed since one can deduce it from
 (3.33)  in  ~\cite[ p. 524]{[15]Kumam2009newHybridIterativeMethod} and  the fact that  $z_{n} \rightarrow p$).

(ii). A similar remark applies to the main results in  \cite{[1]CaiBu2011strongConvergence,
[2]Chantaragsi2010coerciveMappings,  [19]Qin2008strongConvergence,[20]Qin2008generalIterativeMethodForEquilibriumProblems,[21]Qin2008generalIterativeMethodForEquilibriumProblems}.

 \end{rem}

\section{\textbf{Inverse strongly monotone mappings}}
J. Chen, L. Zhang, T. Fan \cite{[3]Chen2007nonexpansiveMappings}, S. Takahashi and W. Takahashi \cite{[27]Takahashi2008strongConvergenceTheorem}, H. Iiduka and W.Takahashi \cite{[4]IidukaTakahashi2005strongConvergenceTheorems},  K. R. Kazmi,  Rehan Ali and Mohd Furkan \cite{[14]Kazmi2018hybridIterativeMethod},  X. Qin, M. Shang and Y. Su \cite{[21]Qin2008generalIterativeMethodForEquilibriumProblems},  M. Zhang \cite{[30]Zhang2012iterativeAlgorithmsFixedPointSets},  S. Shan and N. Huang \cite{[25]Shan2012iterativeMethodVectorEquilibrium}, T. Jitpeera and  P. Kumam \cite{[7]Jitpeera2011fixedPointProblems},  S. Peathanom and W. Phuengrattana\cite{[17]Peathanom2011hybridIterativeMethodGeneralizedEquilibrium}, Piri \cite{[18]Piri2012generalIterativeMethodEquilibriumSystems},  X. Qin, M. Shang and Y. Su \cite{[20]Qin2008generalIterativeMethodForEquilibriumProblems} and  M. Lashkarizadeh Bami and E. Soori \cite{[16]Kumam2009newHybridIterativeMethod}
considered some iterative methods for finding a common element of a set of
fixed points of nonexpansive mapping and the set of  solutions of a variational
inequality $V I(C,A)$, where $A$ is an $\alpha$-inverse strongly monotone mapping
of $C$ into $H$. In this section, a  spacial case similar to cocoercive mappings is studied.
\begin{lem}\label{qwovnrt}
   Let $A$ be     an  $\alpha$-inverse strongly monotone mapping and  $V I(C,A)\neq \emptyset$.  Suppose that  $A$  is  also an  $\gamma$-expansive mapping. Then $V I(C,A)$ is a singleton.
\end{lem}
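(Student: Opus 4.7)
The plan is to mimic the argument of Lemma \ref{sgjm}, replacing the use of the relaxed $(m,v)$-cocoercive plus $\epsilon$-Lipschitz combination by the hypotheses now in force: $\alpha$-inverse strong monotonicity together with $\gamma$-expansiveness. The structural skeleton (two fixed elements of $VI(C,A)$, cross substitution in the variational inequality, addition, contradiction) should carry over verbatim; what changes is only how one passes from $Ax_1=Ax_2$ (or near-equality) to $x_1=x_2$.

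First I would pick $x_1,x_2\in VI(C,A)$. By the definition of the variational inequality, both satisfy
\begin{equation*}
\langle Ax_1,y-x_1\rangle\geq 0\quad\text{and}\quad\langle Ax_2,y-x_2\rangle\geq 0\qquad(y\in C).
\end{equation*}
Substituting $y=x_2$ in the first and $y=x_1$ in the second and adding the two inequalities yields, exactly as in \eqref{qwpld},
\begin{equation*}
\langle Ax_2-Ax_1,x_2-x_1\rangle\leq 0.
\end{equation*}

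Next I would invoke $\alpha$-inverse strong monotonicity directly: it gives $\langle Ax_2-Ax_1,x_2-x_1\rangle\geq\alpha\|Ax_2-Ax_1\|^{2}\geq 0$. Combining the two bounds forces both $\langle Ax_2-Ax_1,x_2-x_1\rangle=0$ and $\|Ax_2-Ax_1\|=0$, i.e.\ $Ax_1=Ax_2$. This is the point where the argument diverges from Lemma \ref{sgjm}: there, one exploited $(v-m\epsilon^{2})$-expansiveness which followed from the cocoercive/Lipschitz interplay; here the expansiveness is supplied as a separate hypothesis.

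Finally, applying the $\gamma$-expansiveness of $A$ with $x=x_1$, $y=x_2$ gives
\begin{equation*}
0=\|Ax_1-Ax_2\|\geq\gamma\|x_1-x_2\|,
\end{equation*}
and since $\gamma>0$ this forces $x_1=x_2$. Hence $VI(C,A)$ contains at most one point, proving it is a singleton (nonemptiness being assumed). I do not expect any real obstacle: the only substantive input beyond the prior lemma's outline is observing that inverse strong monotonicity already supplies monotonicity for free, so the ``monotonicity via Lipschitz'' detour in \eqref{jhgjgl} is unnecessary.
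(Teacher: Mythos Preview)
Your proposal is correct and follows essentially the same approach as the paper: cross-substitute in the two variational inequalities, add to get $\langle Ax_2-Ax_1,x_2-x_1\rangle\le 0$, use $\alpha$-inverse strong monotonicity to force $Ax_1=Ax_2$, and then invoke $\gamma$-expansiveness to conclude $x_1=x_2$. The only cosmetic difference is that the paper isolates ``$A$ is monotone'' and ``$A$ is one-to-one'' as preliminary remarks before running the argument, whereas you fold these observations inline.
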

\begin{proof}
  Since $A$ is an  $\alpha$-inverse strongly monotone mapping, it is implied that
  \begin{equation}\label{gdwzl2}
        \langle Ax - Ay, x - y\rangle \geq \alpha\|Ax - Ay\|^{2}\geq 0,
  \end{equation}
(so $A$ is monotone).
  Since $A$ is $\gamma$-expansive, it is concluded  that
   \begin{equation}\label{kkfc2}
    \|Ax - Ay\| \geq \gamma \|x-y\|.
  \end{equation}
   Therefore,   $A$  is one to one, because if $Ax=Ay$, then  from \eqref{kkfc2}, $\|x-y\|=0$, and hence $x=y$.
Let $x_{1}, x_{2} \in V I(C,A)$. Then
\begin{equation}\label{lbjeo2}
  \langle Ax_{1}, y-x_{1}  \rangle \geq 0,
\end{equation}
for each $y \in C$, and
\begin{equation}\label{bjeo2}
  \langle Ax_{2}, y-x_{2}  \rangle \geq 0,
\end{equation}
for each $y \in C$. Substitute $x_{1}$ in  \eqref{bjeo2} and  $x_{2}$ in  \eqref{lbjeo2}, then     $\langle Ax_{1}, x_{2}-x_{1}  \rangle \geq 0$ and
 $ \langle Ax_{2}, x_{1}-x_{2}  \rangle \geq 0$. Adding them, it is implied that
\begin{equation}\label{qwpld2}
  \langle Ax_{2}-Ax_{1}, x_{2}-x_{1}  \rangle \leq 0.
\end{equation}
Since  $A$ is monotone,
 then   $  \langle Ax_{2}-Ax_{1}, x_{2}-x_{1}  \rangle \geq 0$,  and hence from \eqref{qwpld2}, it is concluded  that
\begin{equation}\label{ovl}
    \langle Ax_{2}-Ax_{1}, x_{2}-x_{1}  \rangle = 0.
\end{equation}
Then from \eqref{gdwzl2},  $Ax_{2}=Ax_{1}$ and  since $A$ is one to one, it is gotten that $x_{2}=x_{1}$. Then $V I(C,A)$ is   singleton.
\end{proof}

\begin{rem} Note in Lemma~\ref{qwovnrt} we can replace (a): $A$ being a   $\gamma$-expansive mapping  with $A$ being one to one, and (b): $A$ being an
  $\alpha$-inverse strongly monotone  with $A$ being monotone.
\end{rem}
	
\begin{rem} \textbf{A simple proof in a spacial case:}\label{bhsskkl}
(i).  Consider   Theorem 3.1 in  \cite[\S 3, p. 343]{[4]IidukaTakahashi2005strongConvergenceTheorems}; note   $A$ is an  $\alpha$-inverse strongly monotone mapping there. If we consider    an extra condition  of $\gamma$-expansiveness  (or $A$ being one to one)  in Theorem 3.1   in  \cite{[4]IidukaTakahashi2005strongConvergenceTheorems}
 then from Lemma \ref{qwovnrt}, we have  that $V I(C,A)$ is a singleton and hence   in  Theorem 3.1 in  \cite[\S 3, p. 343]{[4]IidukaTakahashi2005strongConvergenceTheorems} $F(S)\cap V I(C,A)$   is a  singleton, i.e,   $F(S)\cap V I(C,A)=V I(C,A)=\{u\}$ for an element $u\in C$.  The authors   prove  (see  line 8  from below in
 \cite[ p. 345]{[4]IidukaTakahashi2005strongConvergenceTheorems}) that
  \begin{equation}\label{golf}
    \displaystyle \lim_{n}\|Ax_{n}-Au\|=0.
  \end{equation}
  Now, put  $x=x_{n}$  and $y=u$   and then from \eqref{golf} and \eqref{golf2},  we have
   \begin{equation*}
     \displaystyle \lim_{n}\|x_{n}-u\|=0.
   \end{equation*}
Hence,    $x_{n} \rightarrow u$.   Therefore   we have     the main claim of  Theorem 3.1 (note  $ F(S)\cap V I(C,A)= VI(E, B)=\{u\} $). As a result in this situation  one
can remove everything in the proof after line 8 from below    in  \cite[ p. 345]{[4]IidukaTakahashi2005strongConvergenceTheorems}.

(ii). A similar comment applies to the main results in   \cite{[3]Chen2007nonexpansiveMappings, [4]IidukaTakahashi2005strongConvergenceTheorems,[7]Jitpeera2011fixedPointProblems,[8]Jitpeera2011equilibriumProblems,[9]Jitpeera2011hybridAlgorithms,[10]Jitpeera2011shrinkingProjection,[11]Jung2011compositeIterativeMethod,[12]Kangtunyakarn2010iterativeMethods,[13]Kazmi2012extragradientMethod,[14]Kazmi2018hybridIterativeMethod,[16]Kumam2009newHybridIterativeMethod,[17]Peathanom2011hybridIterativeMethodGeneralizedEquilibrium,[18]Piri2012generalIterativeMethodEquilibriumSystems,[20]Qin2008generalIterativeMethodForEquilibriumProblems,[21]Qin2008generalIterativeMethodForEquilibriumProblems,[24]Saewan2011shrinkingProjection,[25]Shan2012iterativeMethodVectorEquilibrium,[26]Sun2014hybridMethods,[27]Takahashi2008strongConvergenceTheorem,[28]Onjae-uea2012convergenceOfInterativeSequences,[29]Yao2010variationalInequalityProblems,[30]Zhang2012iterativeAlgorithmsFixedPointSets} is similar   to \cite{[4]IidukaTakahashi2005strongConvergenceTheorems}.

 \end{rem}
\begin{rem}
Note that the proof of  these algorithms for inverse  strongly monotone mappings  can not be     simplified like in  the proof of algorithms for cocoercive mappings in general. Indeed in   Remark \ref{bhsskkl},  a simple proof is just introduced for a special case of these algorithms.
\end{rem}

\section{Discussion}
In this paper, a simple  proof   for the convergence of  the algorithms  of relaxed $(u, v)$-cocoercive
mappings and $\alpha$-inverse strongly monotone mappings are presented.
Indeed, a refinement of the proofs of some well known results
are presented.
\section{Conclusion}
In this paper, a refinement  of the proofs of some well known
results are given. Indeed, the proofs of these results are made shorter
than the original ones for the class of relaxed $(u, v)$-cocoercive mappings
and $\alpha$-inverse strongly monotone mappings.
\section{Acknowledgements}
The second author is grateful to the University of Lorestan for their
support.
\section{Funding}
Not applicable
\section{Abbreviations}
Not applicable
\section{Availability of data and material}
Please contact author for data requests.
\section{Competing interests}
The authors declare  that they  have no competing interests.
\section{ Authors’ contributions}
All authors contributed equally to the manuscript, read and approved
the final manuscript.

\section{ Author details}
$^{1}$Department of
Mathematics, Texas A  \& M University Kingsville, Kingsville, USA. $^{2}$Department of Mathematics, Lorestan University,
P.O. Box 465, Khoramabad, Lorestan, Iran.  $^{3}$School of Mathematics, Statistics, National University of Ireland,
Galway, Ireland.

% ------------------------------------------------------------------------
\end{document}